\theoremstyle{plain}
\newtheorem{acknowledgement}{Acknowledgement}
\newtheorem{conjecture}{Conjecture}
\newtheorem{lemma}{Lemma}
\newtheorem{theorem}{Theorem}
\numberwithin{equation}{section}
\begin{document}
\title[Random runners are very lonely]{Random runners are very lonely}
\author{Sebastian Czerwi\'{n}ski}
\address{Faculty of Mathematics, Computer Science, and Econometrics,
University of Zielona G\'{o}ra, 65-546 Zielona G\'{o}ra, Poland}
\email{s.czerwinski@wmie.uz.zgora.pl}

\begin{abstract}
Suppose that $k$ runners having different constant speeds run laps on a
circular track of unit length. The Lonely Runner Conjecture states that,
sooner or later, any given runner will be at distance at least $1/k$ from
all the other runners. We prove that, with probability tending to one, a much
stronger statement holds for random sets in which the bound $1/k$ is
replaced by \thinspace $1/2-\varepsilon $. The proof uses Fourier analytic
methods. We also point out some consequences of our result for colouring of
random integer distance graphs.
\end{abstract}

\keywords{lonely runner problem, Diophantine approximation, integer distance
graphs}
\maketitle

\section{Introduction}

Suppose that $k$ runners run laps on a unit-length circular track. They all
start together from the same point and run in the same direction with
pairwise different constant speeds $d_{1},d_{2},\ldots ,d_{k}$. At a given
time $t$, a runner is said to be \emph{lonely} if no other runner is within a distance of $1/k$, both in front and rear. The Lonely Runner Conjecture states that for every runner there is a time at which he is lonely. For instance if $k=2$, one can imagine easily that at some time or other, the two runners will find themselves on antipodal points of the circle, both becoming lonely at that moment.

To give a precise statement, let $\mathbb{T}=[0,1)$ denote the \emph{circle}
(the one-dimensional torus). For a real number $x$, let $\{x\}$ be the
fractional part of $x$ (the position of $x$ on the circle), and let $%
\left\Vert x\right\Vert $ denote the distance of $x$ to the nearest integer
(the circular distance from $\{x\}$ to zero). Notice that $\left\Vert
x-y\right\Vert $ is just the length of the shortest circular arc determined
by the points $\{x\}$ and $\{y\}$ on the circle. It is not difficult to see that
the following statement is equivalent to the Lonely Runner Conjecture.

\begin{conjecture}
\label{Wills}For every integer $k\geqslant 1$ and for every set of positive
integers $\{d_{1},d_{2},\ldots ,d_{k}\}$ there exists a real number $t$ such
that 
\begin{equation*}
\left\Vert td_{i}\right\Vert \geqslant \frac{1}{k+1}
\end{equation*}%
for all $i=1,2,\ldots ,k$.
\end{conjecture}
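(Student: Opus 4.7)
The plan is to attack Conjecture \ref{Wills} via the first-moment / Fourier-analytic method, which proves existential statements of this form by showing that an appropriate positivity integral is strictly positive. Write $\delta = 1/(k+1)$ and introduce a nonnegative weight $w\colon \mathbb{T}\to\mathbb{R}_{\geq 0}$ vanishing on the ``bad'' arc $(-\delta,\delta)$. The quantity of interest is
\begin{equation*}
I(d_1,\ldots,d_k) \;=\; \int_{0}^{1} \prod_{i=1}^{k} w(t d_i)\,dt.
\end{equation*}
A strictly positive value of $I$ forces the existence of a time $t$ at which every factor is positive, hence $\|td_i\|\geq \delta$ for all $i$. The whole conjecture therefore reduces to constructing a single $w$ for which $I>0$ whenever $d_1<d_2<\cdots<d_k$ are positive integers.

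Expanding $w(x) = \sum_{n\in\mathbb{Z}} \hat w(n) e^{2\pi i n x}$ and using orthogonality, the integral becomes
\begin{equation*}
I(d_1,\ldots,d_k) \;=\; \hat w(0)^{k} \;+\; \sum_{\substack{(n_1,\ldots,n_k)\in\mathbb{Z}^{k}\setminus\{0\} \\ n_1 d_1 + \cdots + n_k d_k = 0}} \prod_{i=1}^{k} \hat w(n_i),
\end{equation*}
so the task is to ensure that the diagonal term $\hat w(0)^k$ dominates the contribution of nontrivial integer relations among the $d_i$. I would first choose $w$ to maximise $\hat w(0)$ relative to the tail of $\hat w$: natural candidates are Beurling--Selberg minorants of the indicator of $\mathbb{T}\setminus(-\delta,\delta)$, or a squared Fej\'er-type kernel concentrated on the good region. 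I would then bound the error sum by estimating the number of low-norm integer vectors $(n_1,\ldots,n_k)$ with $\sum n_i d_i = 0$, using the fact that the $d_i$ are distinct and positive to limit the density of such relations. Where the direct bound fails, I would fall back on induction on $k$, combining a lonely time for $k-1$ runners with a pigeonhole argument around that time to accommodate the $k$th runner, in the spirit of the Bohman--Holzman--Kleitman and Barajas--Serra arguments that settle $k\leq 7$.

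The hard part, and indeed the reason the conjecture is still open for $k\geq 8$, is the extremal configuration $\{d_1,\ldots,d_k\}=\{1,2,\ldots,k\}$: for this set the bound $\delta$ is attained with equality precisely at $t=\delta$, and the set of good $t$ is discrete, hence of measure zero. No weight $w$ strictly supported in the good region can then give $I>0$ on this family, so the direct Fourier bound must fail at the boundary. Any proof must therefore use genuine arithmetic information about the $d_i$ to detect and separately handle configurations lying close to this extremal example. I would plan to do this via a Chinese-Remainder reduction: given $d_1,\ldots,d_k$, locate a modulus $N$ for which the residues $d_i \bmod N$ either dilate to $\{1,2,\ldots,k\}$ (so that $t=1/N$ is already a lonely time by hand) or else admit a positive-measure good set to which the Fourier estimate can be applied. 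Making this dichotomy uniform in $k$ --- in particular, preventing the error sum from swallowing the main term as $k$ grows --- is the step where I would expect the argument to stall.
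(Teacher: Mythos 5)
The statement you were asked about is labelled a \emph{Conjecture} in the paper, and it is in fact the open Lonely Runner Conjecture: the paper does not prove it (and no proof is known for $k\geqslant 8$); the paper's actual theorem is the much weaker probabilistic statement that a \emph{random} $k$-subset $D\subseteq\{1,\ldots,n\}$ satisfies $\kappa(D)\geqslant 1/2-\varepsilon$ with probability tending to $1$. Your proposal is therefore not, and cannot honestly be read as, a proof of the statement; it is a strategy sketch, and you yourself identify where it breaks. The break is real and is exactly the point at which all Fourier first-moment arguments for this problem stop: the positivity integral $I=\int_0^1\prod_i w(td_i)\,dt$ with $w$ supported where $\|x\|\geqslant 1/(k+1)$ is \emph{zero} for the extremal set $\{1,2,\ldots,k\}$ (the good set there is a single point, of measure zero), so no choice of weight can make the diagonal term dominate uniformly over all inputs. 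More generally, whenever the $d_i$ admit many integer relations $n_1d_1+\cdots+n_kd_k=0$ of small height, the off-diagonal sum is genuinely comparable to $\hat w(0)^k$ and the method gives nothing. Your proposed rescue moves --- Beurling--Selberg minorants, a Chinese-Remainder dichotomy, induction on $k$ in the style of Bohman--Holzman--Kleitman and Barajas--Serra --- are named but not carried out, and no mechanism is offered to keep the error sum from swallowing the main term as $k$ grows; so the argument has a gap precisely at the step that constitutes the conjecture's difficulty.

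It is worth noting how your sketch relates to what the paper actually does. The expansion of $I$ into a main term plus a sum over integer relations is essentially the computation in the paper's Theorem 2, carried out over $\mathbb{Z}_p$ with $B=C*C$ in place of your weight $w$. The paper, however, only runs this estimate for \emph{$L$-independent} sets, i.e.\ sets with no nontrivial relation of height at most $L$; for such sets the off-diagonal terms all involve some $\|r_i\|_p>L/k$, which is what makes the error term small, and a counting argument shows almost every set is $L$-independent. In other words, the paper deliberately restricts to the generic sets for which your step ``bound the error sum by limiting the density of relations'' actually works, and concedes the exceptional sets (including $\{1,\ldots,k\}$ and its relatives) entirely --- which is why it yields a random-set theorem rather than the conjecture. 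If you want a correct and publishable statement from your approach, you should aim it at that probabilistic target rather than at Conjecture \ref{Wills}.
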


The above bound is sharp as is seen for the sets $\{1,2,\ldots ,k\}$. The paper of Goddyn and Wong \cite{GoddynWong} contains items of  interesting exemplars of such extremal sets. The problem was posed for the first time by \mbox{Wills \cite{Wills}} in connection to Diophantine approximation. Cusick \cite{Cusick} raised the same question independently, as a view obstruction problem in Discrete Geometry (cf. \cite%
{BrassMoserPach}). Together with Pomerance \cite{CusicPomerance}, he
confirmed the validity of the conjecture for $k\leqslant 4$. Bienia et al. \cite%
{Bienia} gave a simpler proof for $k=4$ and found interesting application to
flows in graphs and matroids. Next the conjecture was proved for $k=5$ by
Bohman et al. \cite{BohmanHolzmanKleitman}. A simpler proof for that case
was provided by Renault \cite{Renault}. Recently the case $k=6$ was
established by Barajas and Serra \cite{BarajasSerra}, using a new promising
idea.

Let $D=\{d_{1},d_{2},\ldots ,d_{k}\}$ be a set of $k$ positive integers.
Consider the quantity 

\begin{equation*}
\kappa (D)=\sup_{x\in \mathbb{T}}\min_{d_{i}\in D}\left\Vert
xd_{i}\right\Vert
\end{equation*}%
and the related function $\kappa (k)=\inf \kappa (D)$, where the infimum is
taken over all $k$-element sets of positive integers. So, the Lonely Runner
Conjecture states that $\kappa (k)\geqslant \frac{1}{k+1}$. The trivial
bound is $\kappa (k)\geqslant \frac{1}{2k}$, as the sets $\{x\in \mathbb{T}%
:\left\Vert xd_{i}\right\Vert <\frac{1}{2k}\}$ simply cannot cover the whole
circle (since each of them is a union of $d_{i}$ open arcs of length $\frac{1%
}{kd_{i}}$ each). Surprisingly, nothing much better was proved so far.
Currently the best general bound is 

\begin{equation*}
\kappa (k)\geqslant \frac{1}{2k-1+\frac{1}{2k-3}}
\end{equation*}%
for every $k\geqslant 5$ \cite{Chen}. A slightly improved inequality $\kappa
(k)\geqslant \frac{1}{2k-3}$ holds when $k\geqslant 4$ and $2k-3$ is prime 
\cite{ChenCusic}. Using the probabilistic argument we proved in \cite%
{CzerwinGrytczuk} that every set $D$ contains an element $d$ such that 

\begin{equation*}
\kappa (D\setminus \{d\})\geqslant \frac{1}{k}.
\end{equation*}%
%T
In this paper we prove another general result supporting the Lonely Runner
Conjecture.

\begin{theorem}
Let $k$ be a fixed positive integer and let $\varepsilon >0$ be fixed real number. Let $D\subseteq \{1,2,\ldots ,n\}$ be a $k$-element subset chosen
uniformly at random. Then the probability that $\kappa (D)\geqslant \frac{1}{%
2}-\varepsilon $ tends to $1$ with $n\rightarrow \infty $.
\end{theorem}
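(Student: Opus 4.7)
The plan is to produce, with high probability, a non-empty open set of points $x \in \mathbb{T}$ on which every $xd_i$ is close to $1/2$ on the circle, so that $\kappa(D)$ is automatically close to $1/2$. Fix a non-negative smooth function $\psi:\mathbb{T}\to[0,1]$, $\psi\not\equiv 0$, supported in the open arc $(1/2-\varepsilon,\, 1/2+\varepsilon)$; its Fourier coefficients $\hat\psi(m) = \int_0^1 \psi(y)\,e^{-2\pi i m y}\,dy$ decay faster than any polynomial in $|m|$, so $C_\psi := \sum_{m\in\mathbb{Z}}|\hat\psi(m)|$ is finite. Define
\[
Y(D) := \int_0^1 \prod_{i=1}^k \psi(x d_i)\, dx.
\]
If $Y(D)>0$ then for some $x$ every factor $\psi(xd_i)$ is positive, forcing $\|xd_i\|>1/2-\varepsilon$ for all $d_i\in D$ and hence $\kappa(D)\geq 1/2-\varepsilon$. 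Thus it suffices to show that $\mathbb{P}[Y(D)=0]\to 0$, which I would attack via the second moment method.

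Expanding $\psi$ in its Fourier series and integrating in $x$ term by term yields
\[
Y(D) \;=\; \sum_{\substack{(m_1,\ldots,m_k)\in\mathbb{Z}^k \\ m_1 d_1 + \cdots + m_k d_k = 0}} \prod_{i=1}^k \hat\psi(m_i),
\]
whose contribution from the zero vector is the positive constant $c := \hat\psi(0)^k$. For any fixed non-zero $(m_1,\ldots,m_k)\in\mathbb{Z}^k$, pick an index $j$ with $m_j\neq 0$; conditioning on the other coordinates forces $d_j$ to equal a specific integer, an event of probability at most $1/(n-k+1)=O(1/n)$ for a uniformly random ordered $k$-tuple of distinct elements of $[n]$. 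Since $\sum_{m\neq 0}\prod_i|\hat\psi(m_i)| \leq C_\psi^k$ is finite, summing the error gives $\mathbb{E}[Y(D)] = c + O(1/n)$.

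An analogous Fourier expansion of $Y(D)^2$ introduces a \emph{pair} of linear constraints $\sum m_i d_i = 0 = \sum m'_i d_i$; whenever $(m,m')\neq(0,0)$, at least one of them is non-trivial and the same conditioning argument bounds the joint probability by $O(1/n)$. The resulting double sum is still absolutely convergent, yielding $\mathbb{E}[Y(D)^2] = c^2 + O(1/n)$, so $\mathrm{Var}\,Y(D) = O(1/n)$. Chebyshev's inequality then gives
\[
\mathbb{P}[Y(D)=0] \;\leq\; \frac{\mathrm{Var}\,Y(D)}{(\mathbb{E}[Y(D)])^2} \;=\; O\!\left(\frac{1}{n}\right),
\]
which tends to $0$ as $n\to\infty$, completing the argument.

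The main obstacle I anticipate is the bookkeeping in the second moment: one must check that the $O(1/n)$ bound on the joint Diophantine event holds uniformly across all non-zero pairs $(m,m')$, including degenerate cases such as $m=0$ or $m$ proportional to $m'$, and one must justify the term-by-term interchange of Fourier summation with the expectation over $D$. The rapid decay of $\hat\psi$ inherited from smoothness is exactly what makes both steps routine. Working instead with the raw indicator $\mathbf{1}_{(1/2-\varepsilon,\,1/2+\varepsilon)}$ would only give $|\hat\chi(m)| = O(1/|m|)$, the analogous $k$-fold sums would diverge logarithmically, and one would have to introduce a delicate truncation at scale $|m_i|\leq n^{\alpha}$ to recover the argument.
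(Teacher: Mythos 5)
Your proof is correct, but it follows a genuinely different route from the paper's. The paper transfers the problem to $\mathbb{Z}_p$ for a prime $p$ slightly larger than $n$ and splits the argument in two: first a deterministic criterion (its Theorem 2) stating that every $L$-independent set $D\subseteq\mathbb{Z}_p$ with $L>\sqrt{k^3 3^{k-1}/(2^{k+1}\varepsilon^{2k})}$ already satisfies $\kappa(D)\geqslant 1/2-\varepsilon$, proved by showing positivity of the weighted count $I=\sum_t B(td_1)\cdots B(td_k)$, where $B=C*C$ is the convolution square of an arc around $p/4$; and then a counting estimate showing that all but a fraction $O\bigl(k(2p^{1/(k+1)}+1)^k/p\bigr)$ of the $k$-subsets of $\mathbb{Z}_p^{*}$ are $L$-independent, plus a prime-gap remark to come back from $p$ to $n$. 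You instead stay on the torus, replace the Fej\'er-type kernel $B$ (whose coefficients decay like $1/r^2$ -- the paper's substitute for smoothness, which is exactly the point of your closing remark about the raw indicator) by a smooth bump $\psi$, and use the randomness of $D$ directly: each fixed nonzero frequency vector satisfies $m_1d_1+\cdots+m_kd_k=0$ with probability at most $1/(n-k+1)$, so the nontrivial part of the Fourier expansion of $Y(D)$ is negligible in expectation, and Chebyshev finishes (in fact Markov applied to $|Y(D)-\hat\psi(0)^k|$ already suffices, so the variance computation is optional). Your route is shorter and self-contained: it needs no $\mathbb{Z}_p$, no notion of $L$-independence, and no passage between $n$ and a nearby prime, and it even yields an explicit $O(1/n)$ bound on the failure probability. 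The paper's route, in exchange, isolates a deterministic sufficient condition ($L$-independence with an explicit bound on $L$) that applies to concrete, non-random sets and is of independent interest, e.g., for explicit instances and for the integer distance graph application; your argument only delivers the probabilistic statement.
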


The proof uses elementary Fourier analytic technique for subsets of $\mathbb{%
Z}_{p}$. We give it in the next section. In the last section we point to a
striking consequence of our result for colouring of integer distance graphs. 

\section{Proof of the main result}

Let $k$ be a fixed positive integer and let $p\geqslant k$ be a prime
number. For $a\in \mathbb{Z}_{p}$, let $\left\Vert a\right\Vert _{p}=\min
\{a,p-a\}$ be the circular distance from $a$ to zero in $\mathbb{Z}_{p}$.
We will need the following notion introduced by \cite{Ruzsa}. Let $L$ be a
fixed positive integer. A set $D=\{d_{1},\cdots,d_{k}\}\subseteq \mathbb{Z}_{p}$
is called $L$\emph{-independent} in $\mathbb{Z}_{p}$ if equation 

\begin{equation*}
d_{1}x_{1}+d_{2}x_{2}+\ldots +d_{k}x_{k}=0
\end{equation*}%
has no solutions satisfying 

\begin{equation*}
0<\sum\limits_{i=1}^{k}\left\Vert x_{i}\right\Vert _{p}\leqslant L.
\end{equation*}%

We will show that for appropriately chosen $L$, any $L$-independent set can
be pushed away arbitrarily far from zero. Then we will demonstrate that for
such $L$, almost every set in $\mathbb{Z}_{p}$ is $L$-independent.

Let $f:\mathbb{Z}_{p}\rightarrow \mathbb{C}$ be any function and let $\hat{f}%
:\mathbb{Z}_{p}\rightarrow \mathbb{C}$ denote its Fourier transform, that is 

\begin{equation*}
\hat{f}(r)=\sum_{x\in \mathbb{Z}_{p}}f(x)\omega ^{rx},
\end{equation*}%
where $\omega =e^{\frac{2\pi }{p}i}$. For a set $A\subseteq \mathbb{Z}_{p}$,
by $A(x)$ we denote its characteristic function. We will make use of the
following basic properties of the Fourier transform:

\begin{description}
\item[(F1)] $\left\vert \hat{f}(r)\right\vert =\left\vert \hat{f}%
(-r)\right\vert $ for every $r\in \mathbb{Z}_{p}$.

\item[(F2)] $f(x)=\frac{1}{p}\sum_{r\in \mathbb{Z}_{p}}\hat{f}(r)\omega
^{-rx}$ for every $x\in \mathbb{Z}_{p}$.

\item[(F3)] $\hat{A}(0)=\left\vert A\right\vert $ for every subset of $%
\mathbb{Z}_{p}$.
\end{description}

In the lemma below we give a bound for the Fourier coefficient $\hat{A}(r)$
for the sets of the form 

\begin{equation}
A=\left\{ s,s+1,\ldots ,l\right\} ,
\tag{*}
\end{equation}%
where $l$ and $s$ are elements of $\mathbb{Z}_{p}$, such that $s<l$. This
bound does not depend on $l$ and $s$.
The following lemma can be easily proved, as for instance in \cite{Book}(p. 39). We proved this for a reader convenience. 
\begin{lemma}\label{lem}
\label{wsp Fouriera}If $0<r<\frac{p}{2}$, then 
\begin{equation*}
\left\vert \hat{A}(r)\right\vert \leqslant \frac{p}{2r}.
\end{equation*}
\end{lemma}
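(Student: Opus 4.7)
The plan is to evaluate $\hat A(r)$ explicitly as a geometric sum, bound the numerator trivially, and control the denominator via Jordan's inequality.

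First I would write out the definition and recognize a geometric progression:
\begin{equation*}
\hat A(r)=\sum_{x=s}^{l}\omega^{rx}=\omega^{rs}\,\frac{\omega^{r(l-s+1)}-1}{\omega^{r}-1},
\end{equation*}
valid because $0<r<p/2$ guarantees $\omega^{r}\neq 1$. Taking absolute values and using $|\omega^{rs}|=1$ together with the trivial bound $|\omega^{r(l-s+1)}-1|\leqslant 2$ gives
\begin{equation*}
|\hat A(r)|\leqslant \frac{2}{|\omega^{r}-1|}.
\end{equation*}

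Next I would simplify $|\omega^{r}-1|=|e^{2\pi i r/p}-1|=2\sin(\pi r/p)$, which is positive precisely because $0<\pi r/p<\pi/2$ under our hypothesis. Then the whole task reduces to lower-bounding $\sin(\pi r/p)$.

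The key elementary estimate is Jordan's inequality $\sin\theta\geqslant \frac{2\theta}{\pi}$ for $\theta\in[0,\pi/2]$. Applied with $\theta=\pi r/p$, it gives $\sin(\pi r/p)\geqslant 2r/p$, hence $|\omega^{r}-1|\geqslant 4r/p$. Substituting back yields $|\hat A(r)|\leqslant p/(2r)$, as desired. There is no real obstacle here; the only thing to be careful about is the range condition $0<r<p/2$, which is exactly what is needed to stay in the regime where Jordan's inequality applies and where the geometric-series denominator is nonzero.
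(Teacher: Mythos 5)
Your proof is correct and follows essentially the same route as the paper: sum the geometric progression, reduce to a lower bound on $\left\vert \omega^{r}-1\right\vert = 2\sin(\pi r/p)$, and apply Jordan's inequality $\sin\theta\geqslant \frac{2\theta}{\pi}$ on $0<\theta<\frac{\pi}{2}$. The only cosmetic difference is that you bound the numerator trivially by $2$ instead of symmetrizing it into a sine factor as the paper does, which if anything makes the computation cleaner.
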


\begin{proof}
By simple calculations we have
\begin{align*}
|\hat{A}(r)|=\Big|\sum_{x=s}^{l}\omega^{rx}\Big|=%
\Big|\frac{\omega^{r(l+1)}-\omega^{rs}}{\omega^{r}-1}\Big|=&\\
\Big|\frac{\omega^{\frac{r(l+s+1)}{2}}}{\omega^{\frac{r}{2}}}\cdot
\frac{\omega^{\frac{r(l+1-s)}{2}}-\omega^{\frac{-r(l+1-s)}{2}}} {\omega^{\frac{r}{2}}-\omega^{\frac{-r}{2}}}\Big|=&
\Big|\frac{\sin(\frac{\pi r}{p})} {\sin(\frac{\pi r}{p})}\Big|.
\end{align*}%

Using inequality $\sin (x)\geqslant \frac{2x}{\pi }$ for $x<\frac{\pi }{2}$,
we get 

\begin{equation*}
\left\vert \hat{A}(r)\right\vert \leqslant \frac{p}{2r}.\qedhere
\end{equation*}

\end{proof}

Now, we state and prove the aforementioned property of $L$-independent sets.

\begin{theorem}
\label{tw L-niezal}Let $0<\varepsilon <\frac{1}{2}$ be a fixed real number.
Let $D$ be a $k$-element, $L$-independent set in $\mathbb{Z}_{p}$, where%
\begin{equation*}
L> \sqrt{\frac{k^33^{k-1}}{2^{k+1}\varepsilon ^{2k}}}.
\end{equation*}%
Then%
\begin{equation*}
\kappa (D)\geqslant 1/2-\varepsilon .
\end{equation*}
\end{theorem}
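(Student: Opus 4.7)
The plan is to locate, inside the target arc $A = \{y \in \mathbb{Z}_p : \|y\|_p \geq \lceil (1/2-\varepsilon)p\rceil\}$, a non-negative ``bump'' $g$ with quadratic Fourier decay, and show via the orthogonality relation in $\mathbb{Z}_p$ that
\[
N' := \sum_{x \in \mathbb{Z}_p} \prod_{i=1}^{k} g(xd_i) > 0.
\]
Since $g \geq 0$, any $x$ contributing to this sum must give $g(xd_i) > 0$, and hence $xd_i \in A$, for every $i$; setting $t = x/p \in \mathbb{T}$ then forces $\kappa(D) \geq 1/2 - \varepsilon$.

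I would take $B \subseteq \mathbb{Z}_p$ to be an arc of length $\lfloor \varepsilon p \rfloor$ centred at $\lfloor p/4 \rfloor$, and set $g = \mathbf{1}_B \ast \mathbf{1}_B$. Then $g \geq 0$, $\mathrm{supp}(g) = B + B$ is an arc around $p/2$ of length $\approx 2\varepsilon p$ and so contained in $A$, and since $\hat g = (\hat B)^2$, Lemma \ref{lem} upgrades to
\[
|\hat g(r)| \leq \frac{p^2}{4 \|r\|_p^2} \quad (r \neq 0), \qquad \hat g(0) = |B|^2.
\]
Character orthogonality in $\mathbb{Z}_p$ yields
\[
N' = \frac{1}{p^{k-1}} \sum_{\substack{r_1,\ldots,r_k \in \mathbb{Z}_p \\ r_1 d_1 + \cdots + r_k d_k \equiv 0}} \prod_{i=1}^{k} \hat g(r_i),
\]
whose main term (all $r_i = 0$) is $|B|^{2k}/p^{k-1} \approx \varepsilon^{2k} p^{k+1}$.

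To bound the error, I would group nonzero tuples by the size $m = |\{i : r_i \neq 0\}|$, necessarily at least $2$. For any such tuple $L$-independence gives $\sum_i \|r_i\|_p > L$, so by pigeonhole some coordinate $j$ has $\|r_j\|_p > L/m$, and therefore $|\hat g(r_j)| < m^2 p^2/(4L^2)$. The remaining $m-1$ factors I would bound via the uniform-in-$p$ estimate $\sum_{r \neq 0} |\hat g(r)| \leq (p^2/4) \sum_{r \neq 0} \|r\|_p^{-2}$, where the last sum is an absolute constant. Summing over the choice of $j$, over the $\binom{k}{m}$ choices of support, and over $m = 2, \ldots, k$, the total error is at most $p^{k+1}/L^2$ times a quantity of the form $\sum_m \binom{k}{m} m^3 C^{m-1} \varepsilon^{2(k-m)}$, whose dominant term at $m = k$ is $k^3 C^{k-1}$. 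Comparing to the main term $\varepsilon^{2k} p^{k+1}$ produces a hypothesis on $L^2$ of exactly the shape of the theorem.

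The main obstacle is the careful bookkeeping required to recover the explicit constant $k^3 3^{k-1}/(2^{k+1} \varepsilon^{2k})$: the structural steps (self-convolution, character orthogonality, pigeonhole on the argmax via $L$-independence) are routine, but reducing the cascading constants to the form stated requires attention. The quadratic decay of $\hat g$ is essential; using $\mathbf{1}_A$ in place of $\mathbf{1}_B \ast \mathbf{1}_B$ would turn $\sum_{r \neq 0} \|r\|_p^{-2}$ into $\sum_{r \neq 0} \|r\|_p^{-1} \sim \log p$, forcing $L$ to grow with $p$ and defeating the very point of the theorem.
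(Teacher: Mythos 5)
Your proposal is correct and follows essentially the same route as the paper: the paper likewise takes the self-convolution $B=C*C$ of an arc of length about $\varepsilon p$ centred near $p/4$, counts $\sum_t\prod_i B(td_i)$ via orthogonality, uses $L$-independence plus pigeonhole to force one coordinate with $\Vert r_j\Vert_p>L/k$ (you refine this to $L/m$), and controls the rest through the quadratic decay $|\hat B(r)|\leqslant(p/(2\Vert r\Vert_p))^2$ from Lemma \ref{lem}. Your bookkeeping by the number of nonzero coordinates is slightly finer than the paper's uniform bound over all remaining coordinates, but the argument and the resulting condition on $L$ are the same.
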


\begin{proof}
Let \[C=\{x\in\mathbb{Z}_p:(\frac{1}{4}-\frac{\varepsilon}{2})p<x<(\frac{1}{4}+\frac{\varepsilon}{2})p\} \] and let $C(x)$ be the characteristic function of the set $C$.  
Define convolution of two functions $f$ and $g$ by 
\[(f*h)(x)=\sum_{y\in\mathbb{Z}_p }f(y)\cdot g(x-y).\] 
Denote by $B(x)=(C*C)(x)$ convolution of function $C$ with itself.
It is easy to see that $\hat{B}(r)=\hat{C}(r)\cdot \hat{C}(r)$ for all $r\in\mathbb{Z}_p$.

So, if we find $t\in \mathbb{Z}_{p}$ such that $tD\subseteq \mathrm{supp} B$, 
where $\mathrm{supp} B=\{x\in\mathbb{Z}_p:B(x)\ne 0\}$, then at the same time we push the set $D$ away into the small arc $\left( \frac{1}{2}%
-\varepsilon ,\frac{1}{2}+\varepsilon \right) $ on the torus $\mathbb{T}$.

Then the expression 

\begin{equation*}
I=\sum_{t\in \mathbb{Z}_{p}}B(td_{1})B(td_{2})\cdots B(td_{k})
\end{equation*}%
counts those numbers $t$ which push the set $D$ away to a distance $\frac{1}{2%
}-\varepsilon $ from zero. We will show that $I\neq 0$. From properties of  the Fourier transform results that

\begin{equation*}
I=\sum_{t\in \mathbb{Z}_{p}}\left( \frac{1}{p}\sum_{r_{1}\in \mathbb{Z}_{p}}%
\hat{B}(r_{1})\omega ^{-td_{1}r_{1}}\right) \cdots \left( \frac{1}{p}%
\sum_{r_{k}\in \mathbb{Z}_{p}}\hat{B}(r_{k})\omega ^{-td_{k}r_{k}}\right) .
\end{equation*}%

Denoting  $\overset{\rightarrow}{r}=(r_1,r_2,\cdots,r_{k})$, we get
\begin{equation*}
p^{k}I=\sum_{\overset{\rightarrow}{r}\in \mathbb{Z}_{p}^k}%
\hat{B}(r_{1})\cdots \hat{B}(r_{k})\sum_{t\in \mathbb{Z}_{p}}\omega
^{-t(d_{1}r_{1}+\cdots+d_{k}t_{k})}.
\end{equation*}%

The expression $\sum_{t}\omega ^{-t(d_{1}r_{1}+\cdots+d_{k}t_{k})}$ is
equal to $p$ when
\begin{equation}
d_{1}r_{1}+\cdots+d_{k}r_{k}\equiv 0 \pmod p,  \tag{**}
\end{equation}
and is equal to zero in the contrary case.  As a consequence we may write 
\begin{equation*}
p^{k-1}I=\sum_{\overset{\rightarrow}{r}\in \mathbb{Z}_{p}^k}%
\hat{B}(r_{1})\cdots \hat{B}(r_{k})R(\overset{\rightarrow}{r}),
\end{equation*}
where $R(\overset{\rightarrow}{r})=1$ for $r_{1},\ldots ,r_{k}$ satisfying
the equation (**), and $R(\overset{\rightarrow}{r})=0$ in the opposite situation. Since $D$ is $L$-independent, the identity $R(\overset{\rightarrow}{r})=1$ holds only for those $r_{1},\ldots ,r_{k} $ satisfying condition $\sum_{i=1}^{k}\left\Vert r_{i}\right\Vert
_{p}>L$, or $r_{1}=r_{2}=\ldots =r_{k}=0$. Hence,
\begin{equation*}
p^{k-1}I-|C|^{2k}=\sum_{\overset{\rightarrow}{r}\in \mathbb{Z}_{p}^{k},\sum
\left\Vert r_{i}\right\Vert _{p}>L}\hat{B}(r_{1})\cdots \hat{B}%
(\overset{\rightarrow}{r}),
\end{equation*}
as for $r_{i}=0$ the Fourier coefficient $\hat{B}(r_{i})$ is equal to square of the
size of $C$. So, by showing that 

\begin{equation*}
\left\vert C\right\vert ^{2k}>\sum_{\sum \left\Vert r_{i}\right\Vert
_{p}>L}\left\vert \hat{B}(r_{1})\right\vert \cdots \left\vert \hat{B}%
(r_{k})\right\vert R(\overset{\rightarrow}{r}),
\end{equation*}%
we will confirm that $I\neq 0$.

The property of $L$-independence of the set $D$ implies that in any
nontrivial solution of (**) there is some $r_{i}$ satisfying $\left\Vert
r_{i}\right\Vert _{p}>\frac{L}{k}$. The estimates for those $r_{i}$  
\begin{equation*}
\left\vert \hat{B}(r_{i})\right\vert=
\left\vert \hat{C}(r_{i})\right\vert^2
 \leqslant \left (\frac{p}{2r_{i}}\right )^2
 \leqslant \left (\frac{kp}{2L}\right )^2
\end{equation*}%
results from Lemma \ref{wsp Fouriera}.

Denote by $\overset{\rightarrow}{r_j}=(r_1,\cdots,r_{j-1},r_{j+1},\cdot,r_k)$, the vector $\overset{\rightarrow}{r}$ with $j^{th}$ coordinate missing . Substituting this to the previous sum we obtain
\begin{align*}
&\sum_{\sum \left\Vert r_{i}\right\Vert _{p}>L}\left\vert \hat{B}%
(r_{1})\right\vert \cdots \left\vert \hat{B}(r_{k})\right\vert
R(r_{1},\ldots ,r_{k}) \\
\leqslant&\Big ( \frac{kp}{2L}\Big )^2\sum\limits_{j=1}^{k}\sum_{\overset{\rightarrow}{r_j}\in \mathbb{Z}_{p}^{k-1}}\left\vert \hat{B}%
(r_{1})\right\vert \cdots \left\vert \hat{B}(r_{j-1})\right\vert \left\vert 
\hat{B}(r_{j+1})\right\vert \ldots \left\vert \hat{B}(r_{k})\right\vert  \\
\leqslant &k\Big ( \frac{kp}{2L}\Big )^2\sum_{\overset{\rightarrow}{r_k}\in \mathbb{Z}%
_{p}^{k-1}}\left\vert \hat{B}(r_{1})\right\vert \cdots \left\vert \hat{B}%
(r_{k-1})\right\vert.
\end{align*}

The last sum may be estimated further. Let $S_p=\{0,1,\cdots,\frac{p-1}{2}\}$  and we get 
\begin{align*}
&\sum_{\overset{\rightarrow}{r_k}\in \mathbb{Z}_{p}^{k-1}}\left\vert \hat{B}%
(r_{1})\right\vert \cdots \left\vert \hat{B}(r_{k-1})\right\vert  \\
\leqslant &2^{k-1}\sum_{\overset{\rightarrow}{r_k}\in S_p^{k-1}}\left\vert \hat{B}(r_{1})\right\vert \cdots \left\vert \hat{%
B}(r_{k-1})\right\vert.
\end{align*}%

Thus, applying Lemma \ref{wsp Fouriera} again we get%
\begin{align*}
&\sum_{\sum \left\Vert r_{i}\right\Vert _{p}>L}\left\vert \hat{B}%
(r_{1})\right\vert \cdots \left\vert \hat{B}(r_{k})\right\vert
R(\overset{\rightarrow}{r}) \\
\leqslant &k\big(\frac{kp}{2L}\Big)^2
\cdot 2^{k-1}\cdot \Big(\frac{p^{k-1}}{2^{k-1}}\Big)^2\cdot 
\Big (1+\sum_{r\in S_p}\frac{1}{r^2}\Big )^{k-1} \\
\leqslant &k\big(\frac{kp}{2L}\Big)^2
\cdot 2^{k-1}\cdot \Big(\frac{p^{k-1}}{2^{k-1}}\Big)^2\cdot 
\Big (1+\frac{\pi^2}{2}\Big )^{k-1}
\end{align*}%
since $1+\frac{\pi^2}{2}\leq3$, we obtain  
\begin{equation*}
\sum_{\sum \left\Vert r_{i}\right\Vert  {p}>L}\left\vert \hat{B}%
(r_{1})\right\vert \cdots \left\vert \hat{B}(r_{k})\right\vert
R(\overset{\rightarrow}{r})
\leqslant \frac{k^3p^{2k}3^{k-1}}{2^{k+1}L^2}.
\end{equation*}%
So, by the assumption on $L$ we obtain%
\begin{equation*}
\sum_{\sum \left\Vert r_{i}\right\Vert _{p}>L}\left\vert \hat{B}%
(r_{1})\right\vert \cdots \left\vert \hat{B}(r_{k})\right\vert
R(\overset{\rightarrow}{r})< (\varepsilon p )^{2k}\leqslant \left\vert C\right\vert
^{2k}.
\end{equation*}%
This completes the proof.
\end{proof}

\begin{proof}[Proof of Theorem 1]
Let $L$ be a number satisfying inequalities%
\begin{equation*}
 \sqrt{\frac{k^3 3^{k-1}}{2^{k+1}\varepsilon ^{2k}}}<L<\sqrt[k+1]{p}.
\end{equation*}%
Such numbers $L$ exist provided that $p$ is sufficiently large. By \mbox{Theorem \ref {tw L-niezal}}, $\kappa (D)\geqslant \frac{1}{2}-\varepsilon $ for every $L$%
-indepedent set $D$. We show that the second inequality implies that almost
every set in $\mathbb{Z}_{p}^{\ast }$ is $L$-independent. Indeed, the number
of sets that are not $L$-independent is at most%
\begin{equation*}
(2L+1)^{k}\binom{{p-1}}{{k-1}}.
\end{equation*}%
So, the fraction of those sets in $\mathbb{Z}_{p}^{\ast }$ is equal%
\begin{equation*}
\frac{(2L+1)^{k}\binom{{p-1}}{{k-1}}}{\binom{{p-1}}{{k}}}=\frac{%
(2L+1)^{k}k}{p-k}<\frac{(2\sqrt[k+1]{p}+1)^{k}k}{p-k}.
\end{equation*}%
The last expression tends to zero with $p$ tending to infinity. This
completes the proof, as the ratios of two consecutive primes tend to one.
\end{proof}

\section{Integer distance graphs}

We conclude the paper with a remark concerning \emph{integer distance graphs}%
. For a given set $D$, consider a graph $G(D)$ whose vertices are positive
integers, with two vertices $a$ and $b$ joined by an edge if and only if $%
\left\vert a-b\right\vert \in D$. Let $\chi (D)$ denote the chromatic number
of this graph. It is not hard to see that $\chi (D)\leqslant \left\vert
D\right\vert +1$.

To see a connection to parameter $\kappa (D)$, put $N=\left\lceil \kappa
(D)^{-1}\right\rceil $ and split the circle into $N$ intervals $%
I_{i}=[(i-1)/N,i/N)$, $i=1,2,\ldots ,N$ (cf. \cite{RuszaTuzaVoigt}). Let $t$ be a real number such that $min_{d\in D}\| dt\|=\kappa(D)$. Then
define a colouring $c:\mathbb{N}\rightarrow \{1,2,\ldots ,N\}$ by $c(a)=i$ if
and only if $\{ta\}\in I_{i}$. If $c(a)=c(b)$ then $\{ta\}$ and $\{tb\}$ are
in the same interval $I_{i}$. Hence $\left\Vert ta-tb\right\Vert
<1/N\leqslant \kappa (D)$, and therefore $\left\vert a-b\right\vert $ is not
in $D$. This means that $c$ is a proper colouring of a graph $G(D)$. So, we
have a relation%
\begin{equation*}
\chi (D)\leqslant \left\lceil \frac{1}{\kappa (D)}\right\rceil .
\end{equation*}%
Now, by Theorem 1 we get that $\chi (D)\leqslant 3$ for almost every graph $%
G(D)$.

A different proof of a stronger version of this result has been
recently found by Alon \cite{alon}. He also extended the theorem for arbitrary
Abelian groups, and posed many intriguing questions for general
groups.

\begin{acknowledgement}
I would like to thank Tomasz Schoen for an inspiring idea of using
independent sets, and to Jarek Grytczuk for stimulating discussions
and help in preparation of the manuscript. I thank the anonymous
referees for valuable suggestions concerning the merit of the paper. I
also acknowledge a support from Polish Ministry of Science and Higher
Education (MNiSW) (N N201 271335).
\end{acknowledgement}

\end{document}